\newtheorem{thm}{Theorem}[section]
\newtheorem{defn}[thm]{Definition}
\newtheorem{lem}[thm]{Lemma}
\newtheorem{cor}[thm]{Corollary}
\theoremstyle{definition}
\newtheorem{rem}[thm]{Remark}
\begin{document}

\title{Linear Tropicalizations}

\author{Mustafa Hakan G\"unt\"urk\"un}
\address{Ged\.iz University \\ 35665, Izm\.ir \\ Turkey} 
\email{hakan.gunturkun@gediz.edu.tr}

\author{Al\.i Ula\c{s} \"Ozg\"ur K\.i\c{s}\.isel}
\address{Middle East Technical University \\06531 Ankara \\  Turkey}
\email{akisisel@metu.edu.tr }

\keywords{tropicalization, Berkovich space, line arrangements} 

\begin{abstract}
Let $X$ be a closed algebraic subset of $\mathbb{A}^{n}(K)$ where $K$ is an algebraically closed field complete with respect to a nontrivial non-Archimedean valuation. We show that there is a surjective continuous map from the Berkovich space of $X$ to an inverse limit of a certain family of embeddings of $X$ called linear tropicalizations of $X$. This map is injective on the subset of the Berkovich space $X^{an}$ which  contains all seminorms arising from closed points of $X$. We show that the map is a homeomorphism if $X$ is a non-singular algebraic curve. Some applications of these results to transversal intersections are given. In particular we prove that there exists a tropical line arrangement which is realizable by a complex line arrangement but not realizable by any real line arrangement.    
\end{abstract}

\maketitle

\section{Introduction}

Suppose that $K$ is an algebraically closed field complete with respect to a nontrivial non-Archimedean valuation $\nu:K\rightarrow \mathbb{R}\cup \{-\infty\}$. The tropicalization of a closed algebraic subset $X$ of $K^n$ is the image $\nu(X)$ where $\nu$ stands for the coordinate-wise valution map into $(\mathbb{R}\cup \{-\infty\})^n$. Studying tropicalizations of varieties proved to be useful for answering several questions about classical algebraic geometry. In order to see various aspects of this connection one can consult \cite{gub, ims, mac, mik}. 

The tropicalization construction is extrinsic which means that it depends on the particular embedding of $X$ in $K^{n}$ and not just on $X$ as an abstract variety. In \cite{payne}, Payne considers all embeddings of $X$ into affine spaces (and later into toric varieties) and forms an inverse system of the resulting tropicalizations in the category of topological spaces (also see \cite{fgp}). Then the inverse limit of all such tropicalizations can be regarded as an intrinsic tropicalization of $X$. It is proven in the same paper that this intrinsic tropicalization is homeomorphic to $X^{an}$, which is the analytification of the variety $X$ in the sense of Berkovich.

In this paper we consider a smaller subset of embeddings whose components are linear combinations of the coordinate functions of the original embedding. One advantage of working with such embeddings is that they preserve degree. The composition of a linear embedding with the valuation will be called a linear tropicalization. In Theorem \ref{thm:inverse} it is shown that there is a surjective map from $X^{an}$ to the inverse limit of all linear tropicalizations which is also injective on the subset of $X^{an}$ that contains all seminorms arising from the closed points of $X$. However, the map fails to be injective once $dim(X)\geq 2$. We prove in Theorem \ref{thm:curve} that if $X$ is a nonsingular algebraic curve then $\pi$ is a homeomorphism. We also give an example of a singular curve for which $\pi$ is not injective.

The last section of the paper is devoted to some applications of these results to transversal intersections. In particular, Theorem \ref{thm:transverse} states that if $X$ and $Y$ are two varieties in $\mathbb{A}^n$ intersecting at a set of $m$ reduced points, then there exists a linear tropicalization $Trop_i$ such that $Trop_{i}(X)\cap Trop_{i}(Y)=Trop_{i}(X\cap Y)$. Determining tropicalizations of a variety having this property has been investigated by several authors, see \cite{mor, op, or}. Remark \ref{rem:bezout} explains that the tropical Bezout's theorem for curves can be used to give a proof of the classical Bezout's theorem. Corollary \ref{cor:linearrangements} shows that any line arrangement in $\mathbb{CP}^2$ admits a linear tropicalization such that along with the linearity and incidence relations, the transversality of the intersections are preserved. In Corollary \ref{cor:realizability}, we show that there exists a tropical line arrangement that can be realized by a complex line arrangement but not by any real line arrangement.

\noindent \textbf{Acknowledgments.}
We would like to thank J\'er\^ome Poineau for providing the example in Remark \ref{rem:noninjective} and for clarifying a point in a previous version of this paper. We would also like to thank Alp Bassa and Eric Katz for their valuable comments.

\section{Linear Tropicalizations }

Assume that $K$ is an algebraically closed field complete with respect to a non-Archimedean valuation $\nu$. Let us equip $\mathbb{T}=\mathbb{R}\cup \{-\infty\}$ with the topology extending the standard one on $\mathbb{R}$ such that open neighborhoods of $-\infty$ are semi-infinite open intervals and extend $\nu$ to a map from $K$ to $\mathbb{T}$ by setting $\nu(0)=-\infty$. We will identify $\mathbb{A}^n$ with $K^n$ and by abuse of notation set $\nu:\mathbb{A}^n\rightarrow \mathbb{T}^n$ to be the map given by coordinate-wise valuation. Let $X$ be a closed algebraic subset of $\mathbb{A}^n$ and $z_1,\ldots, z_n$ coordinate functions on $\mathbb{A}^n$. 

Let $X^{an}$ denote the Berkovich analytic space of $X$. Then $X^{an}$, as a set, contains the multiplicative seminorms on the ring of regular functions $K[X]$ of $X$ compatible with $\nu$ (a multiplicative seminorm $[\,\,]_{x}$ on $K[X]$ is compatible with $\nu$ if $[a]_{x}=\exp(\nu(a))$ for any $a\in K$). For each closed point $x\in X$ one has a multiplicative seminorm $[f]_{x}=|f(x)|$. Denote the set of all such seminorms coming from closed points of $X$ by $X^{cl}$. One has $X^{cl}\subsetneq X^{an}$. Indeed one major reason for studying $X^{an}$ is that $X^{cl}$ is totally disconnected with respect to weak topology whereas $X^{an}$ is not and retains important attributes of $X$ \cite{br, berk, tem}.

Let $N$ be a positive integer. Pick an $N$-tuple $f_{1},\ldots,f_{N}$ of elements of $K[z_1,\ldots,z_{n}]$. Using this $N$-tuple one can define a morphism $i:X\rightarrow \mathbb{A}^N$ by setting $i(x)=(f_{1}(x),\ldots,f_{N}(x))$. For each such morphism, $\nu\circ i: X\rightarrow \mathbb{T}^N$ gives a tropicalization of $X$ denoted by $Trop_i(X)$. A single tropicalization evidently depends on the particular choice of the morphism, hence is extrinsic. In order to obtain an intrinsic tropicalization of $X$, one can consider the family of ``all'' possible morphisms as follows: 

First, suppose that $\varphi: \mathbb{A}^N\rightarrow \mathbb{A}^M$ is a morphism such that $\varphi$ restricts to a homomorphism of algebraic groups $(K^{*})^N\rightarrow (K^{*})^M$. Important examples are projections to any subset of coordinates of $\mathbb{A}^N$. Say $i_N: X\rightarrow \mathbb{A}^N$ and $i_M:X\rightarrow \mathbb{A}^M$ are two morphisms as above. We say that $\varphi$ is equivariant with respect to $(i_{N}, i_{M})$ if $\varphi\circ i_N =i_M$. It is straightforward to see that if $\varphi$ is equivariant then it induces a linear map $Trop(\varphi): \mathbb{T}^N\rightarrow \mathbb{T}^M$ restricting to $Trop_{i_N}(X)\rightarrow Trop_{i_M}(X)$. If $\varphi$ is a projection to a subset of coordinates of $\mathbb{A}^N$, then $Trop(\varphi)$ is the projection to the same subset of coordinates of $\mathbb{T}^N$.    

Now consider the family $\mathcal{F}$ of all $i$ as above such that $i:X\rightarrow \mathbb{A}^N$ is an isomorphism onto its image, namely the family of all embeddings among such morphisms. Given two embeddings $i_{N}:X\rightarrow \mathbb{A}^N$ and $i_{M}:X\rightarrow \mathbb{A}^M$, set $i_{N}\times i_{M}:X\rightarrow \mathbb{A}^{N+M}$ to be the map given by $(i_N\times i_M) (x)=(i_N(x), i_M(x))$. Then $i_N\times i_M$ also belongs to $\mathcal{F}$. Furthermore, if $\varphi_{1}: \mathbb{A}^{N+M}\rightarrow \mathbb{A}^N$ and $\varphi_{2}: \mathbb{A}^{N+M}\rightarrow \mathbb{A}^M$ are projections to the first $N$ and last $M$ coordinates respectively, then $\varphi_{1}$ is equivariant with respect to $(i_{N}\times i_{M}, i_N)$ and $\varphi_{2}$ is equivariant with respect to $(i_{N}\times i_{M}, i_M)$. 

Let us now consider the partial order defined on $\mathcal{F}$ such that $i$ dominates $j$ if there exists an equivariant morphism $\varphi$ with respect to $(i,j)$. Since any two embeddings in $\mathcal{F}$ are dominated by an element of $\mathcal{F}$, one can consider the inverse limit of tropicalizations of $X$ with respect to this partial order. Set 
\[  Trop(X)=\varprojlim_{i\in \mathcal{F}} Trop_{i}(X)  \]
where the inverse limit is in the category of topological spaces. Then $Trop(X)$ can be viewed as an intrinsic tropicalization of $X$. In \cite{payne}, Payne proved that $Trop(X)$ is homeomorphic to the Berkovich space $X^{an}$ of $X$. More specifically, given  $f_{1},\ldots,f_{N} \in K[z_1,\ldots,z_{n}]$, let $\pi_i: X^{an} \rightarrow \mathbb{T}^N$ be the continuous map given by $x \mapsto (log[f_1]_x,\ldots, log[f_N]_x)$. Set 
\[ \pi(x)=\varprojlim_{i\in \mathcal{F}} \pi_{i}(x).                        \]
Then, Payne's theorem asserts that $\pi:X^{an}\rightarrow Trop(X)$ is a homeomorphism. 

We would like to consider an inverse limit over a smaller subset of isomorphisms, namely the ones preserving degree. 

\begin{defn} Suppose that $f_1,\ldots, f_N$ are linear combinations of $z_1,\ldots,z_n$ and $1$ in $K[z_{1},\ldots,z_{n}]$. We say that the morphism $i:X \rightarrow \mathbb{A}^N$ given by $i(x)=(f_1(x),\ldots, f_N(x))$ is linear. 
\end{defn}

Let $\mathcal{G}$ be the set of all linear embeddings $i:X\rightarrow \mathbb{A}^N$ for an arbitrary value of $N$. We will say that $Trop_i(X)$ is a linear tropicalization of $X$. If $i_{N}$ and $i_{M}$ are two linear embeddings then it is clear that $i_{N}\times i_{M}$ is a linear embedding. Put a partial order on $\mathcal{G}$ such that $i$ dominates $j$ if there exists an equivariant morphism $\varphi$ with respect to $(i,j)$ which is a composition of a linear embedding and a projection. Then we can talk about the inverse limit of tropicalizations of $X$ over the family $\mathcal{G}$.

\section{Inverse Limit of All Linear Tropicalizations }

\begin{thm} \label{thm:inverse} Let $X$ be a closed algebraic subset of $\mathbb{A}^n$. \\
Let $\pi=\displaystyle{\varprojlim_{i\in \mathcal{G}}\pi_i}: X^{an}\rightarrow  \displaystyle{\varprojlim_{i\in \mathcal{G}} Trop_{i}(X)}$. Then

\noindent (i) $\pi$ is surjective. \\
(ii) $\pi$ restricted to $X^{cl}$ is injective.  
\end{thm}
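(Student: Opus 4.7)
For (i) surjectivity, I will run a finite-intersection-property argument against a compact fiber. Fix a compatible family $(t_{i})_{i\in\mathcal{G}} \in \varprojlim_{\mathcal{G}} Trop_{i}(X)$. The inclusion $i_{0} = (z_{1},\ldots,z_{n}) \colon X \hookrightarrow \mathbb{A}^{n}$ is itself a linear embedding, hence $i_{0} \in \mathcal{G}$, and the key observation will be that the fiber $C := \pi_{i_{0}}^{-1}\{t_{i_{0}}\} \subseteq X^{an}$ is compact: it is a closed subset of the $K$-affinoid subdomain $\{[\,]_{x} \in X^{an} : [z_{k}]_{x} \leq \max(1,\exp((t_{i_{0}})_{k})) \text{ for all } k\}$, which is compact as the Berkovich spectrum of a $K$-affinoid algebra. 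For each $i \in \mathcal{G}$, the set $A_{i} := \pi_{i}^{-1}\{t_{i}\} \cap C$ is closed in $C$. The critical step is to verify that $\{A_{i}\}_{i\in\mathcal{G}}$ has the finite intersection property: given $i_{1},\ldots,i_{k} \in \mathcal{G}$, the product embedding $j := i_{0} \times i_{1} \times \cdots \times i_{k}$ lies in $\mathcal{G}$ and dominates each factor via the coordinate projections, so compatibility of the family $(t_{i})$ forces $t_{j}$ to project onto $t_{i_{0}}$ and onto each $t_{i_{\ell}}$. Since $\pi_{j}$ maps $X^{an}$ surjectively onto $Trop_{j}(X)$, the fiber $\pi_{j}^{-1}\{t_{j}\}$ is nonempty and sits inside $C \cap A_{i_{1}} \cap \cdots \cap A_{i_{k}}$. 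Compactness of $C$ combined with FIP then produces some $x \in \bigcap_{i \in \mathcal{G}} A_{i}$, giving $\pi(x) = (t_{i})$.

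For (ii), let $x = (x_{1},\ldots,x_{n})$ and $y = (y_{1},\ldots,y_{n})$ be closed points of $X$ with $\pi(x) = \pi(y)$. For each index $k$ and each scalar $c \in K$, the shifted morphism $i_{c}^{(k)} := (z_{1},\ldots,z_{n},z_{k}-c) \colon X \to \mathbb{A}^{n+1}$ is linear, and it is an embedding because its first $n$ coordinates already form the closed embedding $i_{0}$. Hence $i_{c}^{(k)} \in \mathcal{G}$, and the equality $\pi_{i_{c}^{(k)}}(x) = \pi_{i_{c}^{(k)}}(y)$ extracts $|x_{k} - c| = |y_{k} - c|$ for every $c \in K$. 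Specializing $c = x_{k}$ forces $|x_{k} - y_{k}| = 0$, so $y_{k} = x_{k}$, and running this over all $k$ yields $x = y$.

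The main technical point is the compactness of $C$ in (i), which rests on the standard Berkovich fact that spectra of $K$-affinoid algebras are compact; apart from this the FIP argument is purely formal, and the injectivity argument in (ii) is elementary. A further subtlety worth noting is that the family $\mathcal{G}$ is not cofinal inside Payne's family $\mathcal{F}$, because an equivariant morphism between affine spaces restricts to a monomial map on the tori and therefore cannot recover a non-linear embedding from a linear one; one therefore cannot deduce (i) from Payne's theorem through a forgetful comparison, and the direct FIP construction above is essential.
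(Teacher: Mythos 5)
Your proposal is correct, and for part (i) it takes a genuinely different route from the paper. The paper disposes of (i) in one line: the limit map over the full family $\mathcal{F}$ of embeddings is surjective by Payne's theorem, and $\mathcal{G}$ is a sub-poset of $\mathcal{F}$. As you observe, that deduction silently requires the restriction map $\varprojlim_{\mathcal{F}} Trop_{i}(X)\rightarrow \varprojlim_{\mathcal{G}} Trop_{i}(X)$ to be surjective, i.e.\ that every compatible family over $\mathcal{G}$ extends to one over $\mathcal{F}$; since $\mathcal{G}$ is not cofinal in $\mathcal{F}$ (a coordinate of an embedding in $\mathcal{F}$ need not be a monomial in linear forms, and Remark \ref{rem:noninjective} shows the two limits genuinely differ already for $\mathbb{A}^{2}$), this extension step is not purely formal. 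Your finite-intersection-property argument against the compact fiber $C=\pi_{i_{0}}^{-1}\{t_{i_{0}}\}$, cut out inside a Weierstrass affinoid domain of $X^{an}$, supplies exactly the missing compactness input and gives a self-contained proof that does not invoke Payne's theorem at all; the price is the standard facts that affinoid spectra are compact and that $\pi_{j}(X^{an})\supseteq Trop_{j}(X)$, both of which you use correctly (the product embedding $i_{0}\times i_{1}\times\cdots\times i_{k}$ dominating each factor via coordinate projections is also how the paper organizes its inverse system, so the FIP verification is sound). For part (ii) your argument is essentially the paper's: the paper separates closed points $x\neq y$ by a linear form $f$ with $f(x)=0\neq f(y)$ adjoined as an extra coordinate of an embedding, which is precisely your choice $f=z_{k}-x_{k}$ run contrapositively; both are correct.
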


\begin{proof} (i) Since the map $\displaystyle{\varprojlim_{i\in \mathcal{F}} \pi_{i}}$ is surjective by Theorem 1.1 in \cite{payne} and $\mathcal{G}$ is a sub-poset of $\mathcal{F}$, we deduce that $\displaystyle{\varprojlim_{i\in \mathcal{G}}\pi_i}$ is surjective. 

\noindent (ii) Say $[\,\,]_{x}$ and $[\,\,]_{y}$ are two points in $X^{cl}$ corresponding to $x,y\in X$. Select a linear polynomial $f$ such that $f(x)=0$ but $f(y)\neq 0$. Then for any embedding of the form $i(z)=(f(z),f_{1}(z),\ldots,f_{N}(z))$, the first coordinates of $\pi_{i}(x)$ and $\pi_{i}(y)$ are different. We can find an infinite tower of such embeddings by changing the value of $N$ and this shows that $\pi(x)\neq \pi(y)$.  
\end{proof}

\begin{rem} \label{rem:noninjective} 
The map $\pi$ is not injective on all of $X^{an}$ if $n\geq 2$. Here is a concrete example (given by J. Poineau) for $n=2$: Let $r\in (0,1]$. For a single variable polynomial $p(x)$, let $|p(x)|_{r}=\max_{i} |a_{i}|r^{i}$ where $\displaystyle{p(x)=\sum_{i}a_{i}x^{i}}$.
Define two points $[\,\,]_{1}$, $[\,\,]_{2}$ in $(\mathbb{A}^{2})^{an}$ by 
\[ [f(x,y)]_{1}=|f(x,x^{2})|_{r}, \qquad      [f(x,y)]_{2}=|f(x,x^{2}+x^{3})|_{r}  \]
respectively. Then $[\,\,]_{1}\neq [\,\,]_{2}$ since, for instance, they take different values on the polynomial $f(x,y)=x^{2}-y$.  Suppose now that $f(x,y)=ax+by+c$ is linear. Then 
\[  [f(x,y)]_{1}=\max(|a|r, |b|r^{2}, |c|), \qquad  [f(x,y)]_{2}=\max(|a|r, |b|r^{2}, |b|r^{3}, |c|). \]
Since $r\leq 1$, these two numbers are equal. This implies that $\pi([\,\,]_{1})=\pi([\,\,]_{2})$, hence the map $\pi$ is not injective on $(\mathbb{A}^{2})^{an}$.
\end{rem}

In contrast to the remark above, the map $\pi$ is a bijection, hence a homeomorphism, when $X$ is a nonsingular algebraic curve. We will prove this result in several steps.

\begin{lem} The map $\pi$ is injective if $X=\mathbb{A}^{1}$ or if $X=B(x,r)$ is an open ball of radius $r$ in $\mathbb{A}^{1}$. 
\end{lem}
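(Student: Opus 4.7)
The plan is to exploit multiplicativity of Berkovich seminorms together with the fact that $K$ is algebraically closed.

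I first handle $X=\mathbb{A}^{1}$. The inverse limit data $\pi([\,\,]_{x})$ records, for every affine linear polynomial $L=az+b\in K[z]$, the value $[L]_{x}$, since the coordinate functions of linear embeddings of $\mathbb{A}^{1}$ are exactly such $L$. Suppose two Berkovich points $[\,\,]_{x}$ and $[\,\,]_{y}$ satisfy $\pi([\,\,]_{x})=\pi([\,\,]_{y})$, so that $[L]_{x}=[L]_{y}$ for every linear $L$. Given any $p\in K[z]$, I factor $p(z)=c\prod_{i=1}^{d}(z-\alpha_{i})$ with $c,\alpha_{i}\in K$ (possible since $K$ is algebraically closed) and apply multiplicativity together with compatibility with $\nu$:
\[
[p]_{x} \;=\; \exp(\nu(c))\prod_{i=1}^{d}[z-\alpha_{i}]_{x} \;=\; \exp(\nu(c))\prod_{i=1}^{d}[z-\alpha_{i}]_{y} \;=\; [p]_{y}.
\]
Hence the two seminorms agree on all of $K[z]$, so $[\,\,]_{x}=[\,\,]_{y}$ as points of $(\mathbb{A}^{1})^{an}$.

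For $X=B(x,r)$ I reduce to the previous case. A Berkovich point of $B(x,r)^{an}$ is represented by a (possibly degenerating) decreasing family $\{B(a_{n},s_{n})\}$ of closed subdisks of $B(x,r)$, and its seminorm restricts to a multiplicative seminorm on $K[z]$ via the inclusion $K[z]\hookrightarrow\mathcal{O}(B(x,r))$. This restriction is injective on Berkovich points, because the nested family can be recovered from the values $[z-\alpha]=\lim_{n}\max(|a_{n}-\alpha|,s_{n})$ as $\alpha$ ranges over $K$. Thus the hypothesis $\pi([\,\,]_{x})=\pi([\,\,]_{y})$ forces agreement on all linear polynomials, and the argument from the $\mathbb{A}^{1}$ case then gives equality on all of $K[z]$, hence in $B(x,r)^{an}$.

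The step I expect to require the most care is the claim in the second paragraph that a point of $B(x,r)^{an}$ is determined by its values on linear polynomials; this is standard in Berkovich theory but merits an explicit verification, particularly for limit-type seminorms coming from sequences of closed disks with empty intersection. Once that is granted, both cases reduce to the algebraic fact that $K$ is algebraically closed, so every polynomial factors into linear pieces on which the two seminorms already agree.
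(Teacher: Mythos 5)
Your proof is correct, and your $\mathbb{A}^{1}$ case is exactly the paper's argument: factor $p$ into linear pieces over the algebraically closed field $K$ and use multiplicativity plus compatibility with $\nu$. Where you diverge is the open ball. The paper reduces $B(x,r)$ to the polynomial case via the Weierstrass Preparation Theorem: every analytic function on the ball factors as $f=pu$ with $p$ a polynomial and $u$ an invertible power series whose seminorm is the same constant at every point of $B(x,r)^{an}$, so two seminorms agreeing on polynomials agree on all analytic functions. You instead invoke the Berkovich classification of points of the disk as (possibly degenerating) nested families of closed subdisks and argue that the family is recovered from the values $[z-\alpha]$, so that restriction to $K[z]$ is already injective on $B(x,r)^{an}$. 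Both routes are standard (both live in Baker--Rumely, Section 1); the Weierstrass route is more "algebraic" and generalizes more readily to other affinoid domains, while your route has the virtue of showing directly that the values on \emph{linear} polynomials alone pin down the point, which makes the final appeal to factorization almost redundant in the ball case. The one step you correctly flag---that the nested family is determined by $\alpha\mapsto[z-\alpha]$, including for type 4 points with empty intersection---does need the short verification (e.g., evaluating at $\alpha=a_{n}$ and $\alpha=a_{n}'$ to force the radii and centers to agree up to containment), but this is routine and your proposal is complete once it is supplied.
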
 

\begin{proof} Since $K$ is algebraically closed, every element of the ring of regular functions $K[x]$ of $X$ can be factorized into linear polynomials. Thus, if two seminorms agree on linear polynomials, they must agree on all of $K[x]$. This shows that $\pi$ is an injection when $X=\mathbb{A}^{1}$. 

If $X=B(x,r)$ is an open ball in $\mathbb{A}^{1}$, then recall the argument in \cite{br}, section 1.2, or for a more general discussion about polynomial approximations, \cite{poi}: By the Weierstrass Preparation Theorem, any power series convergent on $B(x,r)$ can be written in the form $f(x)=p(x)u(x)$ where $p$ is a polynomial and $u(x)$ an invertible power series. The invertibility of $u(x)$ implies that $[u(x)]_{y}=1$ for any $[\,]_{y}\in X^{an}$. The argument then proceeds as in the case of $X=\mathbb{A}^{1}$. 
\end{proof} 

\begin{lem} \label{lem:radius}
Suppose that $X$ is a nonsingular algebraic curve in $\mathbb{A}^n$. There exists $r>0$ such that for every $x\in X$, the restriction of the map $\pi$ to $(B(x,r))^{an}$ is injective, where $B(x,r)$ denotes the ball of radius $r$ with center $x$. 
\end{lem}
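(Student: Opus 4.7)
The plan is to use nonsingularity of $X$ to produce, near each $x\in X$, a linear function that serves as an analytic local parameter; use this parameter to identify $B(x,r)\cap X$ with an open ball in $\mathbb{A}^1$ for suitable $r>0$; and then invoke the previous lemma. Uniformity of $r$ in $x$ will come from a compactness argument on a projective model.

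At each $x\in X$, nonsingularity means that the tangent line $T_xX\subset K^n$ is one-dimensional, so there exist many linear functions $\ell_x=c_0+\sum_{i=1}^n c_iz_i$ with $\ell_x(x)=0$ whose differential is nonzero on $T_xX$. For any such $\ell_x$, the non-archimedean implicit function theorem / Weierstrass preparation (cf.\ \cite{br}, \cite{poi}) would provide, for sufficiently small $r>0$, an analytic isomorphism between $B(x,r)\cap X$ and an open ball of appropriate radius in $\mathbb{A}^1$ with coordinate $\ell_x$.

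Under this isomorphism, linear polynomials $a\ell_x+b$ in the ball coordinate correspond exactly to linear functions on $\mathbb{A}^n$ restricted to $X$, and their values are read off by $\pi$ via the linear embeddings in $\mathcal{G}$ that include $\ell_x$ as one of their coordinates. Consequently, if $\pi(y)=\pi(y')$ for two points $y,y'\in(B(x,r))^{an}$, then $[a\ell_x+b]_y=[a\ell_x+b]_{y'}$ for all $a,b\in K$, and the argument of the previous lemma applied to the $\mathbb{A}^1$-ball forces the two seminorms to coincide on all analytic functions on the ball. Hence $\pi$ is injective on $(B(x,r))^{an}$ for some $r=r_x>0$ depending on $x$.

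For uniformity, I would pass to a smooth projective model $\bar X$ of $X$ (the normalization of the projective closure of $X$ in $\mathbb{P}^n$), so that $\bar X^{an}$ is compact. Every $x\in X^{an}\subset\bar X^{an}$ admits an analytic neighborhood of the form above on which $\pi$ is injective via a linear parameter; supplementing with a few further analytic neighborhoods around the finitely many points of $\bar X\setminus X$ (which need not arise from linear functions on $\mathbb{A}^n$, since they do not enter the statement) gives an open cover of $\bar X^{an}$. A finite subcover then produces a uniform $r>0$ by taking the minimum of the associated radii. The main obstacle is precisely this last step: pointwise $r_x>0$ is routine from the local analytic isomorphism, but the radius of convergence can a priori shrink to zero as $x$ varies; compactness of $\bar X^{an}$, together with the finiteness of the critical locus of a fixed linear projection $z_i\colon X\to\mathbb{A}^1$, is what rules this out.
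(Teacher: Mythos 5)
Your local step is sound and is essentially the paper's: near each closed point $x$, a linear function with nonvanishing differential on the tangent line (in the paper, one of the coordinate functions $z_i$ after a generic linear change of coordinates) identifies $B(x,r_x)\cap X$ with an open ball in $\mathbb{A}^1$, linear polynomials in that ball coordinate are among the functions seen by $\pi$, and the previous lemma finishes. The gap is in the uniformity step. The open cover you propose does not exist: the sets $(B(x,r_x))^{an}$ are analytifications of metric balls whose radii are bounded by the distance from $x$ to the critical locus, so they cannot contain the points of $X^{an}$ of large ``radius'' --- for instance a type~2 point corresponding to a closed disk whose radius equals the minimal distance between two critical points --- and such points are not contained in neighborhoods of the finitely many points of $\bar{X}\setminus X$ either. (The paper makes exactly this observation immediately after the lemma: points on a skeleton of $X^{an}$ admit no open-ball neighborhoods, which is why Lemma \ref{lem:local} is needed separately.) Nor can you instead apply compactness to the set of closed points, which is what the statement actually quantifies over: $X(K)$ is totally disconnected and non-compact, and its closure in $\bar{X}^{an}$ is all of $\bar{X}^{an}$, returning you to the same problem. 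So the finite subcover, and with it the uniform $r$, is not obtained.

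The paper gets uniformity by a purely ultrametric argument that you gesture at (``finiteness of the critical locus'') but do not carry out, and which needs all $n$ coordinate projections rather than a single fixed one. After a generic linear change of coordinates, the union $S$ of the critical loci of the $n$ coordinate projections is finite; let $r$ be the minimal distance between distinct points of $S$. For any closed point $p$, the strong triangle inequality shows that at most one point of $S$ lies within distance less than $r$ of $p$, and nonsingularity lets one choose a coordinate projection unramified at that point; hence some coordinate projection has no critical point within distance $r$ of $p$ and maps $B(p,r)\cap X$ isomorphically onto an open ball in $\mathbb{A}^1$. This yields the uniform $r$ with no compactness argument and no projective model. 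Note that a single fixed projection $z_i$ would not suffice, since points approaching its critical locus would force the admissible radius to shrink to zero.
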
 

\begin{proof} 
Let $S$ be the set of points of $X$ such that at least one of the partial derivatives $\partial f/\partial x_{i}$ is zero. Without loss of generality, we may assume that $S$ is a finite set, if necessary by moving everything with an appropriate  linear automorphism
of $\mathbb{A}^{n}$. Let $r>0$ be the minimum of the distances between any two distinct points in $S$. 

Since $X$ is nonsingular, for every $p\in X$ there exists $i$ such that $\partial f/\partial x_{i} (p)\neq 0$ . Given $p$, select $i$ such that the distance $r_{p}$ from $p$ to the nearest point $q$ with  $\partial f/\partial x_{i} (q)= 0$ is maximal. If $p\in S$ then it is clear that $r_{p}\geq r$ by definition. Then for any $p\in X$ the inequality $r_{p}\geq r$ follows from the non-Archimedean triangle inequality. Consequently, for every point $x\in X$ the projection from $B(x,r)$ to one of the coordinate axes is a linear isomorphism to an open ball in $\mathbb{A}^{1}$. The claim then follows from the previous lemma.  
\end{proof} 

The next natural step would be to show that the map $\pi$ is locally one-to-one when $X$ is a nonsingular algebraic curve. The previous lemma says that $X$ can be covered by balls of radii uniformly bounded from below such that on the analytification of each ball the map $\pi$ is injective. However, we cannot immediately deduce that $\pi$ is locally one-to-one since $X^{an}$ need not be covered by analytifications of these open balls. In fact, the points on the minimal skeleton in $X^{an}$  do not admit any open ball neighborhoods. Therefore, the naive argument must be refined. 

Recall from  \cite{abbr} or \cite{bpr} that a skeleton $\Sigma\subset X^{an}$ for $X$ is a finite metrized graph such that its complement in $X$ is a disjoint union of open balls. Also recall that $X^{an}$ locally has the structure of an $\mathbb{R}$-tree. 

\begin{lem}  \label{lem:local}
If $X$ is a nonsingular algebraic curve in $\mathbb{A}^n$  then the map $\pi: X^{an}\rightarrow \displaystyle{\varprojlim_{i\in \mathcal{G}} Trop_{i}(X)}$ is locally one-to-one when restricted to any geodesic segment in $X^{an}$. 
\end{lem}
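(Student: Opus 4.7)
My approach combines Lemma~\ref{lem:radius} with the skeleton and $\mathbb{R}$-tree structure of $X^{an}$ recalled just above. I would choose a skeleton $\Sigma\subset X^{an}$ and refine it by adding vertices and subdividing edges so that, after refinement, every connected component of $X^{an}\setminus\Sigma$ is contained in some $B(x,r)^{an}$ supplied by Lemma~\ref{lem:radius}, and every edge of $\Sigma$ is likewise contained in some $B(x,r)^{an}$. Both refinements can be arranged because classical points are dense in $X^{an}$ and subdividing $\Sigma$ shrinks both the complementary discs and the individual edges.

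Given $p\in\gamma$, I would argue by cases. If $p\in B(x,r)^{an}$ for some closed point $x\in X$, then by Lemma~\ref{lem:radius}, $\pi$ is injective on $B(x,r)^{an}$, and continuity of $\gamma$ puts a sub-interval of $\gamma$ through $p$ inside this ball, giving local injectivity at $p$. If instead $p$ lies on $\Sigma$ but outside every such ball, the $\mathbb{R}$-tree structure shows that $\gamma$ leaves $p$ in at most two tangent directions; by the refinement, each direction enters some $B(x,r)^{an}$ immediately past $p$, so Lemma~\ref{lem:radius} yields injectivity on each direction's sub-interval. The central point $p$ itself is distinguished from every $q\in B(x,r)^{an}$ near it by the linear function $z_j-x_j$ for whichever coordinate index $j$ makes $[z_j-x_j]_p\geq r$ (forcing $[z_j-x_j]_q<r$). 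The same linear functions separate points belonging to different balls $B(x_1,r)^{an}$ and $B(x_2,r)^{an}$, whenever one point lies inside and the other outside a common ball. Combining all directions then yields a neighborhood of $p$ on $\gamma$ on which $\pi$ is injective.

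The main obstacle is the refinement step---ensuring that after enough subdivision, every complementary disc and every edge of $\Sigma$ fits inside some $B(x,r)^{an}$. Refining so that the complementary discs are small is routine. For the edges, one must check that a sufficiently short skeleton edge $e$, whose endpoints are close to a common closed point $x$, is contained in $B(x,r)^{an}$; this follows from the density of classical points in $X^{an}$ together with the uniform radius $r$ furnished by Lemma~\ref{lem:radius}. A naive étale-based reduction to the $\mathbb{A}^1$ case fails at ``large disc'' type-2 points (a degree-$d$ algebraic étale map need not be a local homeomorphism of Berkovich spaces at such points), which is precisely why one must exploit the skeleton structure rather than pulling back directly from a single coordinate projection.
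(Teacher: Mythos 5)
Your first case ($p$ in a complementary component of $X^{an}\setminus\Sigma$) is essentially the paper's, and your observation that the linear functions $z_j-x_j$ separate a point outside $B(x,r)^{an}$ from the points inside it is correct. But the step you yourself identify as the main obstacle --- refining $\Sigma$ until every edge and every complementary component lies in some $B(x,r)^{an}$ --- is not merely delicate, it is impossible in general, and it is exactly the obstruction the paper flags just before the lemma (``the points on the minimal skeleton in $X^{an}$ do not admit any open ball neighborhoods''). By the proof of Lemma~\ref{lem:radius}, $B(x,r)^{an}$ is isomorphic to the analytification of an open disc in $\mathbb{A}^1$, hence uniquely path-connected with a single boundary point; a point on a cycle of $\Sigma$ therefore lies in no such ball, since the cycle would have to enter and leave the ball through that one boundary point. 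More concretely, $\zeta\in B(x,r)^{an}$ forces $[z_j-x_j]_\zeta<r$ for every $j$, which fails for every closed point $x$ as soon as the disc represented by a type~2 point $\zeta$ has radius at least $r$ in some coordinate; subdividing edges does not change which points lie on them, and density of classical points in the weak topology does not yield the metric containment you invoke. This breaks your second case as well: if $p\in\Sigma$ lies outside every ball, the points of $\gamma$ immediately past $p$ along a skeleton edge are typically also outside every ball, so ``each direction enters some $B(x,r)^{an}$ immediately past $p$'' is unjustified, and your separation argument only compares a point outside a ball with points inside one --- it says nothing about two nearby skeleton points neither of which lies in any ball. (The complementary-disc half of the refinement also fails with the uniform radius, since infinitely many complementary balls of radius equal to the height of a high skeleton point hang off that point; this half is repaired by using the maximal balls $B(q,r_q)$ from the \emph{proof} of Lemma~\ref{lem:radius}, not the uniform $r$ of its statement.)

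The missing idea is the paper's treatment of skeleton points, which is genuinely different from covering the skeleton by balls: add the finite set $S$ of Lemma~\ref{lem:radius} to the vertex set of $\Sigma$, pick a classical point $q$ retracting to $p$, and use the maximal injectivity ball around $q$, whose radius equals the distance from $q$ to the nearest point of $S$. Because that nearest point of $S$ now lies on $\Sigma$, hence beyond $p$ on the path from $q$, the ball is forced to reach past $p$ and to contain an open sub-segment of the geodesic around $p$, on which injectivity follows from Lemma~\ref{lem:radius}. Without an argument of this kind --- one producing, for each skeleton point of the geodesic, a single injectivity domain containing a whole neighborhood of that point in the geodesic --- your case analysis does not close.
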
 

\begin{proof} 
Let $\Sigma$ be an arbitrary skeleton for $X$. Refine the skeleton $\Sigma$ by adding the points in the set $S$ of Lemma \ref{lem:radius} (which are all of type 1) to the  vertex set of $\Sigma$, this can be done by \cite{abbr}, Lemma 3.15. The complement of the skeleton $\Sigma$ in $X$ is a disjoint union of open balls. 

Suppose now that $p\in X$. If $p$ is not on $\Sigma$, then it belongs to one of the open balls $B$ mentioned above. Select a type 1 point $q\in B$. By the proof of Lemma \ref{lem:radius}, there exists an open ball $B(q,r)$ such that $\pi$ is injective on $B(q,r)$ and such that there exists a point in $S$ at a distance $r$ from $q$. Then, since $S$ is a subset of the vertex set of $\Sigma$ we deduce that $B\subset B(q,r)$ and consequently $\pi$ is injective on $B$. Hence, $\pi$ is locally one-to-one in a neighborhood of each such $p$ and in particular on any geodesic segment containing $p$. 

It remains to prove that if $p\in  \Sigma$ then $\pi$ is locally one-to-one on any geodesic segment $l$ containing $p$ in its interior. By Lemma 3.15 of \cite{abbr} again, we can modify $\Sigma$ and assume that $l\subset \Sigma$. Say $q$ is a closed point in $X$, not in $\Sigma$, that retracts to $p$. Select an open ball $B(q,r)$ as in Lemma \ref{lem:radius} such that a point of $S$ is at a distance $r$ from $q$ and such that $\pi$ is one-to-one on $B(q,r)$. But then $B(q,r)$ must contain an open subset $\hat{l}$ of $l$ containing $p$ since points in $S$ belong to the skeleton. Since $\hat{l}\subset B(q,r)$, the map $\pi$ is one-to-one on $\hat{l}$.  
\end{proof} 
It remains to show that the map $\pi$ which is locally one-to-one when restricted to geodesic segments is actually globally one-to-one. We prove this below by essentially showing that the fundamental group of the inverse limit of linear tropicalizations cannot be more complicated than the fundamental group of the Berkovich space. 

\begin{thm} \label{thm:curve}
  Let $X$ be a nonsingular algebraic curve in $\mathbb{A}^{n}$. Then the map $\pi=\displaystyle{\varprojlim_{i\in \mathcal{G}}\pi_i}: X^{an}\rightarrow  \displaystyle{\varprojlim_{i\in \mathcal{G}} Trop_{i}(X)}$ is both injective and surjective. 
\end{thm}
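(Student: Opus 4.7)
The surjectivity of $\pi$ is immediate from Theorem \ref{thm:inverse}(i): since $\mathcal{G}\subseteq\mathcal{F}$, the induced continuous surjection $\varprojlim_{\mathcal{F}} Trop_i(X)\twoheadrightarrow\varprojlim_{\mathcal{G}} Trop_i(X)$ factors $\pi$ through Payne's homeomorphism.

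For injectivity, the plan is to argue by contradiction. Suppose $\pi(u)=\pi(v)$ for some $u\neq v$ in $X^{an}$. Since $X$ is a smooth curve, $X^{an}$ is path-connected and locally an $\mathbb{R}$-tree, so $u$ and $v$ can be joined by an arc $\gamma:[0,1]\to X^{an}$ that is a finite concatenation of geodesic segments (after refining a skeleton to contain both endpoints). By Lemma \ref{lem:local}, $\pi\circ\gamma$ is locally injective on each segment, so it represents an essentially nontrivial continuous loop in $T:=\varprojlim_{\mathcal{G}} Trop_i(X)$ based at $\pi(u)$. As foreshadowed by the paragraph before the theorem, the contradiction will come from the fact that $\pi_1(T)$ is no larger than $\pi_1(X^{an})$: a nontrivial loop in $T$ must lift to a nontrivial loop in $X^{an}$, which would force $\gamma$ itself to be a loop, contradicting $u\neq v$.

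The main technical step is to compare these fundamental groups, and this reduces via the deformation retractions of $X^{an}$ and $T$ onto the skeleton $\Sigma$ and its continuous image $\pi(\Sigma)$ to a question about locally injective maps of finite graphs. Concretely, the plan is to show that for each compact subgraph of $\Sigma$ one can find a single linear embedding $i\in\mathcal{G}$ for which $\pi_i$ restricts to an isometric embedding of that subgraph into $Trop_i(X)$; this would force $\pi|_\Sigma$ to be injective and, combined with Lemma \ref{lem:radius} applied to the open balls making up $X^{an}\setminus\Sigma$ (distinct balls retract to distinct points of $\Sigma$), would give injectivity on all of $X^{an}$. The main obstacle is constructing these faithful linear embeddings: one must separate the tangent directions at each type-$2$ vertex of $\Sigma$ by valuations of linear polynomials in $K[z_1,\dots,z_n]$, using closed points on the various incident branches together with the injectivity of $\pi$ on $X^{cl}$ from Theorem \ref{thm:inverse}(ii), and then assemble the resulting separating linear functions into one linear embedding realizing the required faithful tropicalization.
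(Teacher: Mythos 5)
Your surjectivity argument and your reduction of injectivity to the non-existence of an essential loop in the image of an arc (via Lemma \ref{lem:local} and the tree structure of $X^{an}$) match the paper's strategy. But the decisive step is missing: you announce as ``the plan'' and ``the main obstacle'' the construction of a single linear embedding $i\in\mathcal{G}$ that (isometrically, even) faithfully embeds a whole subgraph of the skeleton, by separating all tangent directions at every type-$2$ vertex using valuations of linear polynomials. You never carry this out, and it is essentially a stronger form of the theorem itself rather than a lemma you can lean on --- Remark \ref{rem:noninjective} shows exactly how delicate separation by linear polynomials is, so ``assemble the resulting separating linear functions'' cannot be waved through. The paper avoids this entirely by exploiting the inverse-limit structure: if the arc $C$ from $p$ to $q$ maps to a loop, then already at some finite level $i$ one has an arc whose tropicalization is a circle obtained by identifying the two endpoints $x\neq y$, and adjoining to $i$ a \emph{single} linear polynomial $f$ with $f(x)=0$, $f(y)\neq 0$ produces a dominating $j\in\mathcal{G}$ at which the loop opens up. Only the two endpoints of one offending arc need to be separated, not the whole skeleton.

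Two concrete assertions in your outline are also unjustified or false. First, ``a nontrivial loop in $T$ must lift to a nontrivial loop in $X^{an}$'' has no basis: $\pi$ is not a covering map, so there is no path-lifting, and the whole difficulty is precisely to show that $\pi$ does not create new loops. Second, ``distinct balls retract to distinct points of $\Sigma$'' is false --- infinitely many residue-class balls in $X^{an}\setminus\Sigma$ retract to the same type-$2$ vertex --- so injectivity on $\Sigma$ together with injectivity on each individual ball would not, by itself, yield global injectivity; one would still have to separate points lying in different balls over the same vertex. As written, the proposal records the correct skeleton of the argument but leaves a genuine gap where the proof has to happen.
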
 

\begin{proof} Contrary to the claim, suppose that $\pi$ is not injective. Say $p\neq q \in X^{an}$ but $\pi(p)=\pi(q)$. Since $X^{an}$ has a tree structure, there exists a unique path $C$ from $p$ to $q$. The image $\pi(C)$ must contain a non-contractible loop, else there would exist a point $s\in C$ such that $\pi$ restricted to $C$  would not be locally one-to-one near $s$, contradicting Lemma \ref{lem:local}. Without loss of generality, we can replace $C$ by an inverse image of such a loop and suitably modify $p$, $q$, thus we can assume that $p\neq q$, $\pi(p)=\pi(q)$ and $\pi$ is one-to-one on the unique path $C$ joining $p$ to $q$ except at the endpoints. 

If we reinterpret the picture in terms of inverse limits, the fact that the non-contractible loop $\pi(C)$ survives to $\displaystyle{\varprojlim_{i\in \mathcal{G}} Trop_{i}(X)}$ but it becomes contractible in $\displaystyle{\varprojlim_{i\in \mathcal{F}} Trop_{i}(X)}$ implies the following: There exists a path $P\subset X$ joining $x\neq y\in X$  and a linear tropicalization $Trop_{i}(P)$ of $P$ homeomorphic to a circle and injective except at the end points, such that for every $j\in \mathcal{G}$ dominating $i$, $Trop_{j}(P)$ contains a non-contractible loop. However, select a linear polynomial $f$ such that $f(x)\neq f(y)$ and construct the embedding $j$ by adjoining $f$ to $i$ as the last coordinate. Then $Trop_{j}(P)$ is  both linear and injective on the path $P$, a contradiction. 

\end{proof} 

\begin{rem} 
If $X$ is a singular algebraic curve then $\pi$ is not necessarily injective. For instance, let $X$ be the plane curve $(y-x^{2})(y+x^{2})=0$ in $\mathbb{A}^{2}$. In a similar manner to the example in Remark \ref{rem:noninjective}, let  $r\in (0,1]$ and  $|p(x)|_{r}=\max_{i} |a_{i}|r^{i}$ where $\displaystyle{p(x)=\sum_{i}a_{i}x^{i}}$. Define 
\[ [f(x,y)]_{1}=|f(x,x^{2})|_{r}, \qquad      [f(x,y)]_{2}=|f(x,-x^{2})|_{r}.  \]
Both of these seminorms, viewed as seminorms on $\mathbb{A}^{2}$ induce well-defined seminorms on $X$. They are distinct (take $f(x,y)=y-x^{2}$), yet take the same values on linear polynomials. Hence $\pi([\,\,]_{1})=\pi([\,\,]_{2})$ and the map is not injective. 

The question of characterizing the singular algebraic curves $X$ for which the map $\pi$ is injective remains unanswered. A natural guess is that $\pi$ is injective if and only if the tangent cone at each singular point of $X$ is reduced, in particular $\pi$ is injective if $X$ has only nodal singularities. 
\end{rem}

\section{Transversal Intersections}

\begin{lem} \label{lem:anint} Say $X,Y\subset \mathbb{A}^{n}$ are closed algebraic subsets intersecting transversally. Then 

(a) $(X\cap Y)^{an}=X^{an}\cap Y^{an}$. 

(b) $(X\cap Y)^{cl}=X^{cl}\cap Y^{cl}$. 

\end{lem}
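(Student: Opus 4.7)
The plan is to reduce both claims to straightforward translations of the definitions, using the Nullstellensatz for part (b) and the basic properties of non-Archimedean multiplicative seminorms for part (a). In fact, neither part genuinely needs the transversality hypothesis; its only effect is that, under transversality, the sum of ideals $I(X)+I(Y)$ is already a radical ideal, so one step of the argument below becomes automatic rather than requiring a short radical argument.

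For part (b), I would use the fact that because $K$ is algebraically closed, for any closed algebraic subset $Z\subset \mathbb{A}^n$ the set of closed points of $Z$ is in bijection with the $K$-valued points, i.e.\ with the elements of $K^n$ lying in the zero set $V(I(Z))$. Applying this to $Z=X$, $Z=Y$ and $Z=X\cap Y$, the desired identity $(X\cap Y)^{cl}=V(I(X\cap Y))=V(I(X))\cap V(I(Y))=X^{cl}\cap Y^{cl}$ is immediate from the fact that the vanishing locus of a sum of ideals is the intersection of the individual vanishing loci.

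For part (a), my approach is to regard $X^{an}$, $Y^{an}$ and $(X\cap Y)^{an}$ as subsets of $(\mathbb{A}^n)^{an}$ by identifying a point of $Z^{an}$ with the multiplicative seminorm on $K[z_1,\ldots,z_n]$ obtained by pulling back along the quotient map $K[z_1,\ldots,z_n] \to K[Z]$. Under this identification, $Z^{an}$ is exactly the locus of multiplicative seminorms compatible with $\nu$ that vanish on $I(Z)$. The inclusion $(X\cap Y)^{an}\subset X^{an}\cap Y^{an}$ then follows from $I(X),\,I(Y)\subset I(X\cap Y)$. For the reverse inclusion, I would start from a seminorm $[\,\,]$ vanishing on both $I(X)$ and $I(Y)$: the non-Archimedean triangle inequality together with multiplicativity extends the vanishing to the whole ideal $I(X)+I(Y)$, and then the identity $[h^m]=[h]^m$ shows that any $h\in\sqrt{I(X)+I(Y)}$ also satisfies $[h]=0$. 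Since $I(X\cap Y)=\sqrt{I(X)+I(Y)}$ when $X\cap Y$ is taken with its reduced structure, this gives the desired inclusion.

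The only point requiring care is the initial identification of the three analytifications as subsets of $(\mathbb{A}^n)^{an}$ via the ambient embedding, so that one can speak of a single ambient family of seminorms on the ambient polynomial ring in which to compare them; once this bookkeeping is in place, the multiplicativity-plus-radical step is a couple of lines and there is no serious obstacle.
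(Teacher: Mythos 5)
Your proof is correct and follows essentially the same route as the paper: both directions of (a) amount to translating between seminorms on $K[X\cap Y]$ and seminorms on $K[z_1,\ldots,z_n]$ vanishing on $I(X)+I(Y)$, using that the kernel of a multiplicative non-Archimedean seminorm is an ideal, and (b) is immediate. Your one genuine addition is the observation that multiplicativity ($[h^m]=[h]^m$) lets a seminorm vanishing on an ideal vanish on its radical, so transversality is not really needed; the paper instead invokes transversality precisely to ensure $I(X)+I(Y)$ is already radical, which is the only place the hypothesis enters.
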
 

\begin{proof} Say $I(X)$ and $I(Y)$ are ideals of $X$ and $Y$ respectively, so that $K[X]=K[z_{1},\ldots,z_{n}]/I(X)$ and $K[Y]=K[z_{1},\ldots,z_{n}]/I(Y)$. Since $X$ and $Y$ are intersecting transversally, $rad(I(X)+I(Y))=I(X)+I(Y)$. Therefore $K[X\cap Y]=K[z_{1},\ldots,z_{n}]/(I(X)+I(Y))$. 

(a) Suppose that $[\,\,]_{x}\in (X\cap Y)^{an}$. Then $[\,\,]_{x}$ is a multiplicative seminorm on $K[z_{1},\ldots,z_{n}]$ vanishing on $I(X)+I(Y)$. Therefore $[\,\,]_{x}$ vanishes both on $I(X)$ and $I(Y)$. This implies that $[\,\,]_{x}$ is a multiplicative seminorm on $K[X]$ and $K[Y]$, hence $(X\cap Y)^{an}\subset X^{an}\cap Y^{an}$. 

Conversely, say $[\,\,]_{x}\in X^{an}\cap Y^{an}$. Then $[\,\,]_{x}$ vanishes both on $I(X)$ and $I(Y)$. But the kernel of a seminorm is an ideal, therefore $[\,\,]_{x}$ vanishes on $I(X)+I(Y)$. So $X^{an}\cap Y^{an}\subset (X\cap Y)^{an}$ and we get the desired equality. 

(b) This is clear. 

\end{proof}

If $X$ consists of a single reduced point, then $K[X]=K$, therefore $X^{an}$ consists of a single multiplicative seminorm, namely the one coming from the valuation on $K$. Similarly, if $X$ consists in $m$ distinct reduced points, then $X^{an}$ has $m$ elements and $X^{cl}=X^{an}$ in this case. 

\begin{thm} \label{thm:transverse}
Suppose that $X,Y\subset \mathbb{A}^{n}$ where $X$ and $Y$ intersect at a set of $m$ reduced points. Then there exists a linear tropicalization $Trop_{i}$ of $X\cup Y$ such that $Trop_{i}(X)\cap Trop_{i}(Y)$ consists of $m$ distinct points and $Trop_{i}(X)\cap Trop_{i}(Y)=Trop_{i}(X\cap Y)$. 
\end{thm}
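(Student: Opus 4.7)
The direction $Trop_i(X \cap Y) \subseteq Trop_i(X) \cap Trop_i(Y)$ is automatic for every linear $i \in \mathcal{G}$, so the real content of the theorem is producing a single $i$ for which the reverse inclusion holds and for which the $m$ points of $X \cap Y$ have $m$ distinct tropical images. My plan is to build $i$ in two stages and then invoke Theorem~\ref{thm:inverse}(ii) applied to the closed subset $Z := X \cup Y \subset \mathbb{A}^n$.

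Stage one handles distinctness. By Lemma~\ref{lem:anint}, $X^{cl} \cap Y^{cl} = (X \cap Y)^{cl} = \{p_1, \ldots, p_m\}$ is a set of $m$ distinct closed points of $\mathbb{A}^n$. Since $K$ is infinite, a generic linear polynomial $h$ achieves pairwise distinct values $\nu(h(p_1)), \ldots, \nu(h(p_m))$; the coefficients failing to do so form a proper Zariski-closed subset of the parameter space of linear forms. Include such an $h$ among the coordinate functions of $i$ so that $|Trop_i(X \cap Y)| = m$, and dominate this choice in the rest of the argument.

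Stage two rules out extra tropical intersections. The map $\pi_Z: Z^{an} \to \varprojlim_{\mathcal{G}} Trop_j(Z)$ is injective on $Z^{cl}$ by Theorem~\ref{thm:inverse}(ii). Any extra intersection point $v \in Trop_i(X) \cap Trop_i(Y) \setminus Trop_i(X \cap Y)$ must arise as $v = \nu(i(x)) = \nu(i(y))$ for closed $x \in X$, $y \in Y$ with $x \neq y$ and neither equal to any $p_j$. If no finite $i \in \mathcal{G}$ dominating the one from Stage one eliminated all such bad pairs, then passing to the inverse limit along $\mathcal{G}$ would produce two distinct closed points of $Z$ sharing a common image under $\pi_Z$, contradicting closed-point injectivity.

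The chief obstacle is this final limiting step: affine Berkovich spaces are not compact, so extracting a convergent subnet from a family of bad pairs $(x_i, y_i)$ is not automatic. The natural remedy is either to restrict the bad pairs to the preimage of a compact subset of $\mathbb{T}^N$ under some $\pi_{i_0}$ (giving compact parameter spaces), or to reformulate the bad condition as a descending family of closed subsets of $Z^{cl} \times Z^{cl}$ indexed by $\mathcal{G}$, and then argue by noetherianity on the polyhedral pieces of $\varprojlim Trop_j(Z)$. Remark~\ref{rem:noninjective} makes plain that the whole proof must be pushed through closed points, since the separating properties of $\pi$ fail at general Berkovich points.
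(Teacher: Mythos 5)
Your two-stage outline has the same overall shape as the paper's proof (first arrange that the $m$ points of $X\cap Y$ have $m$ distinct tropical images, then rule out extra tropical intersection points by a lifting/counting argument), but both stages have problems. In Stage one, the claim that the linear forms $h$ failing to separate the valuations $\nu(h(p_1)),\ldots,\nu(h(p_m))$ form a proper Zariski-closed subset of the coefficient space is false: the condition $\nu(h(p_k))=\nu(h(p_l))$ is valuation-theoretic, not algebraic, and it typically holds on a Zariski-dense set (for two points and coefficients in ``general position'' both values equal the maximum of the valuations of the individual terms, hence agree). The paper avoids this entirely: it applies Theorem \ref{thm:inverse} to the finite scheme $X\cap Y$, for which $(X\cap Y)^{an}=(X\cap Y)^{cl}$ has exactly $m$ elements, so injectivity of $\pi$ on closed points already forces some finite-level $Trop_{i}(X\cap Y)$ to have $m$ distinct points (and it then checks that this persists for every $j$ dominating $i$). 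No single separating linear form is needed, and none is claimed.

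Stage two contains the genuine gap, which you name yourself but do not close. Even granting a limiting procedure for the bad pairs $(x_j,y_j)$, the contradiction you aim at --- two distinct closed points of $Z=X\cup Y$ with the same image under $\pi_Z$ --- is not what such a limit would produce: $Z^{cl}$ is not closed in $Z^{an}$, so the limit points would be general Berkovich points, to which Theorem \ref{thm:inverse}(ii) does not apply, and they may well coincide. The paper's contradiction lives elsewhere: assuming $Trop_{i}(X)\cap Trop_{i}(Y)$ has $m+1$ distinct points, it lifts them along a cofinal chain of coordinate projections to $m+1$ distinct points of $X^{an}\cap Y^{an}$, and then invokes Lemma \ref{lem:anint}(a), which identifies $X^{an}\cap Y^{an}$ with the finite set $(X\cap Y)^{an}$ of cardinality $m$. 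It is the finiteness of the full analytic intersection, not injectivity of $\pi_Z$ on closed points, that closes the argument; without that lifting step (or a worked-out substitute for your compactness/noetherianity suggestions, neither of which is developed) the proof does not go through.
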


\begin{proof} Since $X\cap Y$ consists of $m$ distinct reduced points, $(X\cap Y)^{an}=(X\cap Y)^{cl}$ is a set of $m$ points. By Theorem  \ref{thm:inverse} parts (a) and (b), $\pi=\displaystyle{\varprojlim_{i\in \mathcal{G}}\pi_i}: (X\cap Y)^{an}\rightarrow  \displaystyle{\varprojlim_{i\in \mathcal{G}} Trop_{i}(X\cap Y)}$ is both surjective and injective, therefore there exists a linear tropicalization $Trop_{i}$ of $X\cap Y$
that has exactly $m$ points. Note that for every $j$ dominating $i$ in the inverse system, $Trop_{j}(X\cap Y)$ will also have $m$ points. Indeed, suppose that $j$ dominates $i$ and $Trop_{j}(X\cap Y)$ contains  $m+1$ points $p_{1},\ldots,p_{m+1}$. Select a sequence $k_{l}\in \mathcal{G}$ such that $k_{1}=j$ and the maps $Trop_{k_{l+1}}(X\cap Y) \rightarrow Trop_{k_{l}}(X\cap Y)$ are projections. Preimages of $p_{i}$ and $p_{j}$ in each $Trop_{k_{l}}$ are distinct if $i\neq j$. Consequently, there exist distinct points $q_{1},\ldots,q_{m+1}\in (X\cap Y)^{an}$ mapping to $p_{1},\ldots,p_{m+1}$ respectively. This is a contradiction.  

The inclusion $Trop(X\cap Y)\subset Trop(X)\cap Trop(Y)$ holds regardless of which tropicalization is chosen. Conversely, we claim that for the tropicalization chosen above $Trop_{i}(X)\cap Trop_{i}(Y)\subset Trop_{i}(X\cap Y)$. Suppose not. Then there exist $m+1$ distinct points $p_{1},\ldots,p_{m+1}\in Trop_{i}(X)\cap Trop_{i}(Y)$. Select a sequence $k_{l}\in \mathcal{G}$ such that $k_{1}=i$ and the maps $Trop_{k_{l+1}}(X\cup Y)\rightarrow Trop_{k_{l}}(X\cup Y)$ are projections. Then, the restrictions of these maps to $Trop_{k_{l+1}}(X)$ and $Trop_{k_{l+1}}(Y)$ are also projections. As in the previous paragraph, the preimages of $p_{i}$ and $p_{j}$ on each $Trop_{k_{l}}(X)\cap Trop_{k_{l}}(Y)$ are distinct if $i\neq j$.  Therefore there exist $q_{1},\ldots,q_{m+1}\in X^{an}\cap Y^{an}$ mapping to $p_{1},\ldots,p_{m+1}$ respectively. But $X^{an}\cap Y^{an}=(X\cap Y)^{an}=(X\cap Y)^{cl}$ so the cardinality of $X^{an}\cap Y^{an}$ must be $m$. This contradiction finishes the proof. 
\end{proof}

\begin{rem} \label{rem:bezout}
Suppose that $X$ and $Y$ are plane curves intersecting transversally at $m$ points in $\mathbb{CP}^{2}$ (or in a projective plane over any algebraically closed field). We may visualize $X$ and $Y$ in $\mathbb{P}^2(K)$ where $K$ is the completion of the field of Puiseux series over $\mathbb{C}$. Applying Theorem \ref{thm:transverse} we see that there exists a linear tropicalization $Trop_{i}$ of the projective plane such that $Trop_{i}(X)$ and $Trop_{i}(Y)$ intersect at $m$ points. This can be used to produce a proof of (the classical) Bezout's theorem for plane curves using the tropical Bezout's theorem \cite{gath, rst}. 
\end{rem}

\begin{cor} \label{cor:linearrangements}
Let $X$ be a line arrangement in $\mathbb{CP}^2$ (or over any algebraically closed field). Then there exists a linear tropicalization $Trop_{i}$ of $X$ such that all tropical lines in this tropicalization intersect transversally. 
\end{cor}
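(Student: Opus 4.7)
The plan is to reduce to Theorem~\ref{thm:transverse} applied pairwise to the lines of $X$ and then merge the resulting linear embeddings into one.

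Write $X=L_{1}\cup\cdots\cup L_{k}$. Any two distinct lines $L_{i}, L_{j}$ meet at a single reduced point in $\mathbb{CP}^{2}$, so applying Theorem~\ref{thm:transverse} to each pair (projectively, as in Remark~\ref{rem:bezout}, or after passing to a sufficiently generic affine chart) produces a linear embedding $e_{ij}$ of $\mathbb{CP}^{2}$ for which $Trop_{e_{ij}}(L_{i})\cap Trop_{e_{ij}}(L_{j})=Trop_{e_{ij}}(L_{i}\cap L_{j})$ is a single point. Let $e$ be the product of the $\binom{k}{2}$ embeddings $\{e_{ij}\}_{i<j}$. Since the class of linear embeddings is closed under products, $e$ is linear, and it dominates every $e_{ij}$ via the coordinate projection onto the appropriate factor.

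It remains to check that for each pair $(i,j)$ the intersection $Trop_{e}(L_{i})\cap Trop_{e}(L_{j})$ is still a single point. This is precisely the refinement step already carried out inside the proof of Theorem~\ref{thm:transverse}: if the cardinality strictly grew on passing from $e_{ij}$ to $e$, then interpolating a tower of linear embeddings between $e_{ij}$ and $e$ and taking the inverse limit would exhibit two distinct points in $L_{i}^{an}\cap L_{j}^{an}=(L_{i}\cap L_{j})^{an}$, contradicting the fact that $L_{i}\cap L_{j}$ is a single reduced point. Two tropical lines meeting in exactly one point intersect transversally, so the corollary follows.

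The main obstacle I anticipate is not conceptual but bookkeeping: namely, pinning down what a linear tropicalization of $\mathbb{CP}^{2}$ is (for example, an embedding into $\mathbb{CP}^{N}$ via linear forms, with tropicalization landing in tropical projective space) and verifying that products and the domination order behave in this projective framework exactly as they do affinely. Once this framework is fixed, the product-of-embeddings argument above goes through without further difficulty.
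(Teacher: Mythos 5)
Your proposal takes essentially the same route as the paper: the paper's entire proof is ``pass to the projective plane over the completion of the field of Puiseux series and apply Theorem~\ref{thm:transverse},'' and your product-of-embeddings construction is the natural way to make that ``immediately follows'' precise when the arrangement has more than two lines. The one place where your write-up is imprecise is the merge step: the persistence of $Trop_{e}(L_i)\cap Trop_{e}(L_j)=Trop_{e}(L_i\cap L_j)$ when passing from $e_{ij}$ to the dominating product $e$ is \emph{not} the first-paragraph refinement step of Theorem~\ref{thm:transverse}'s proof (that step controls the cardinality of $Trop(X\cap Y)$, which is vacuous here since $L_i\cap L_j$ is literally one point and so has a one-point tropicalization under \emph{every} linear embedding), and it is not obtained by ``interpolating a tower between $e_{ij}$ and $e$,'' since a finite tower ending at $e$ tells you nothing about $(L_i\cap L_j)^{an}$. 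What you actually need is to rerun the \emph{second}-paragraph argument of that proof with the tower started at $k_1=e$ and continued cofinally upward in $\mathcal{G}$: two distinct points of $Trop_{e}(L_i)\cap Trop_{e}(L_j)$ would then lift to two distinct points of $L_i^{an}\cap L_j^{an}=(L_i\cap L_j)^{an}$, which is a singleton. With that correction your argument is exactly at the level of rigor of the paper's own proof of Theorem~\ref{thm:transverse}, and your concern about the affine-versus-projective bookkeeping is the same one the paper silently absorbs in Remark~\ref{rem:bezout}.
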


\begin{proof} As in the previous remark, visualize the line arrangement in the projective plane over the completion of the field of Puiseux series. Then the result immediately follows from Theorem \ref{thm:transverse}.\end{proof} 

Recall that a tropical prevariety, namely a closed set with respect to the tropical semifield operations is said to be realizable (or a tropical variety) if it is the image $\nu$ of an algebraic variety over a non-Archimedean valued field $(K,\nu)$ \cite{rst}. 

\begin{cor} \label{cor:realizability}
There exists a  planar tropical line arrangement which is realizable by a complex line arrangement but not realizable by any real line arrangement. 
\end{cor}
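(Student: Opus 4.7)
The plan is to single out a complex line arrangement whose combinatorial incidence structure is provably non-realizable over $\mathbb{R}$, apply Corollary \ref{cor:linearrangements} to tropicalize it transversally, and then argue that any real realization of the resulting tropical arrangement would have to inherit the forbidden incidence pattern. The natural candidate is the classical Hesse configuration: the $12$ lines in $\mathbb{CP}^{2}$ joining triples of the $9$ inflection points of a smooth complex plane cubic. This realizes the $(9_{4},12_{3})$ incidence pattern in which every one of the $9$ distinguished points lies on exactly $4$ of the lines and every line passes through exactly $3$ of the distinguished points. By the Sylvester--Gallai theorem, any finite non-collinear set of real points in the plane determines a line meeting exactly two of its members; since in the Hesse pattern every line through two of the $9$ points passes through a third, no real arrangement of $12$ lines can exhibit this incidence pattern.

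I would then realize the complex Hesse arrangement as $X\subset \mathbb{P}^{2}(K)$, where $K$ is the completion of the field of Puiseux series over $\mathbb{C}$, and apply Corollary \ref{cor:linearrangements} to produce a linear tropicalization $T=Trop_{i}(X)$ whose pairwise intersections are all transversal. Theorem \ref{thm:transverse} guarantees that each intersection of lines of $X$ corresponds to a distinct intersection point of $T$, so the incidence combinatorics of $T$ faithfully reproduce those of $X$: the tropical arrangement $T$ consists of $12$ tropical lines meeting at $9$ ``quadruple'' points (each common to $4$ of them) and $12$ further ``double'' points, matching the Hesse pattern at the tropical level.

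To conclude, I would suppose for contradiction that $T$ also arises as the tropicalization of a real line arrangement $Y$ of $12$ lines over some non-Archimedean real closed field, and I would aim to show that $Y$ itself realizes the Hesse incidence pattern. At each of the $9$ tropical quadruple points $p$ of $T$, there are $4$ lines of $Y$ whose tropicalizations pass through $p$, producing $\binom{4}{2}=6$ pairwise real intersection points all of which must tropicalize into the tropical intersection at $p$. The crux, and the step where I expect the main difficulty, is a rigidity argument showing that these $6$ real intersections necessarily collapse to a single real quadruple point of $Y$: if the $4$ real lines were not concurrent, then applying Corollary \ref{cor:linearrangements} to $Y$ would produce a transversal refinement separating the $6$ pairwise intersections into distinct tropical points, and the equivariance of linear tropicalizations under linear projections forces this refinement to be detectable already in $T$, contradicting the fact that the $6$ pairs all meet at the single point $p$. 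Once each of the $9$ tropical quadruple points is promoted to a real quadruple point of $Y$, the arrangement $Y$ realizes the Hesse incidence pattern over $\mathbb{R}$, contradicting Sylvester--Gallai; making the rigidity argument watertight is the technical core of the proof.
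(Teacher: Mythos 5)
Your choice of configuration and overall strategy match the paper's: it too takes the Hessian configuration (viewed as the complex $(4,3)$-net, i.e.\ the $(9_{4},12_{3})$ pattern you describe), applies Corollary \ref{cor:linearrangements} to obtain a transversal linear tropicalization, and concludes from the non-realizability of that incidence pattern over $\mathbb{R}$. Your use of the Sylvester--Gallai theorem in place of the paper's citation of Cordovil--Forge (non-existence of real $4$-nets) is a perfectly good, and more elementary, way to handle that ingredient.

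The problem is the rigidity step, which you rightly single out as the crux but whose proposed resolution would fail. Suppose the four real lines of $Y$ lying over a tropical quadruple point $p$ are not concurrent, and apply Corollary \ref{cor:linearrangements} to $Y$ to get a finer linear tropicalization $T'$ separating the six pairwise intersection points. In the inverse system of linear tropicalizations, $T'$ dominates coarser ones via projections, and projections are free to identify distinct points --- that is exactly the phenomenon of Remark \ref{rem:noninjective} and the reason one passes to an inverse limit at all. So the six separated points of $T'$ can all project to the single point $p$ without contradiction: equivariance pushes information from finer tropicalizations down to coarser ones, not the other way around, and nothing forces the refinement to be ``detectable'' in $T$. (Worse, $T$ and $T'$ need not even be comparable in the system, since $T$ was built from the complex arrangement $X$ and $T'$ from the hypothetical real arrangement $Y$.) What is actually needed is a direct argument that for this particular transversal tropicalization any real realization must have its four lines over each quadruple point genuinely concurrent --- i.e.\ that the six intersection points, which merely share the same tropicalization $p$, must coincide; equality of tropicalizations does not by itself imply equality of points. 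The paper's own proof asserts the conclusion without addressing this step, so you have correctly located the genuine difficulty, but the argument you sketch does not close it.
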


\begin{proof} The Hessian configuration in $\mathbb{CP}^2$, namely the complex $(4,3)$-net, has a tropicalization $L$ that commutes with the intersection of the lines, by Corollary \ref{cor:linearrangements}. However, the abstract $(4,3)$-net does not have any real embeddings. Therefore $L$ is not realizable by any real line arrangement. (A detailed study of the Hessian configuration can be found in \cite{art} and k-nets in \cite{yuz}. The nonexistence of real 4-nets is proved in Lemma 2.4 in \cite{cord}.)
\end{proof}


\begin{thebibliography}{18}


\bibitem{abbr} O. Amini, M. Baker, E. Brugall\'{e}, and J. Rabinoff,  \emph{Lifting harmonic morphisms I: metrized complexes and Berkovich skeleta}, Research in the Mathematical Sciences \textbf{2} (2015), no. 1

\bibitem{art} M. Artebani and I. Dolgachev, \emph{The Hesse pencil of plane cubic curves}, L'Enseign. Math. \textbf{55} (2009), 235--273.

\bibitem{br} M. Baker and R.Rumely, \emph{Potential Theory and Dynamics on the Berkovich Projective Line}, Mathematical Surveys and Monographs, \textbf{159}, Amer. Math. Soc., Providence, RI, (2010)

\bibitem{bpr} M. Baker, S. Payne and J. Rabinoff, \emph{On the structure of non-archimedean analytic curves}, Contemporary Math. \textbf{605}, 93–-121 Amer. Math. Soc., Providence, RI (2013) 

\bibitem{berk} V. Berkovich, \emph{Spectral theory and analytic geometry over non-Archimedean fields}, Mathematical Surveys and Monographs, \textbf{33}, Amer. Math. Soc., Provindece, RI, (1990)

\bibitem{bgr} S. Bosch, U. G\"{u}ntzer, and R. Remmert, \emph{Non-Archimedean analysis}, Grundlehren der Mathematischen Wissenschaften, \textbf{261}, Springer-Verlag, Berlin, (1984)

\bibitem{cord} R. Cordovil and D. Forge, \emph{A note on Tutte polynomials and Orlik-Solomon algebras}, European J. Combin. \textbf{24} (2003), no. 8, 1081--1087.

\bibitem{fgp} T. Foster, P. Gross, S. Payne, \emph{Limits of tropicalizations}, Israel J. Math. \textbf{201} (2014), no.2, 835--846.

\bibitem{gath} A. Gathmann, \emph{Tropical algebraic geometry}, Jahresbericht der DMV, \textbf{108},(2006) no. 1, 3-32, e-print arXiv:math.AG/0601322

\bibitem{gub} W. Gubler, \emph{A guide to tropicalizations} in \emph{Algebraic and Combinatorial Aspects of Tropical Geometry}, Contemporary Mathematics, \textbf{589}, 125--189, Amer. Math. Soc., Providence, RI, (2013)

\bibitem{ims} I. Itenberg, G. Mikhalkin, E. Shustin \emph{Tropical Algebraic Geometry}, Oberwolfach Seminars, \textbf{35}, Basel: Birkh\"auser (2009)

\bibitem{mac} D. Maclagan, B. Sturmfels \emph{Introduction to Tropical Geometry}, Graduate Studies in Mathematics, \textbf{161}, Amer. Math. Soc., Providence, RI, (2015)

\bibitem{mik} G. Mikhalkin, \emph{Enumerative tropical algebraic geometry in $\mathbb{R}^2$}, J. Amer. Math. Soc. \textbf{18} (2005), no.2, 313--377.

\bibitem{mor} R. Morrison, \emph{Tropical Images of Intersection Points}, Collect. Math., electronically published on July 21, 2014, DOI: http://dx.doi.org/10.1007/s13348-014-0118-7 (to appear in print)

\bibitem{payne} S. Payne, \emph{Analytification is the limit of all tropicalizations}, Math. Res. Lett. \textbf{16} (2009), no.3, 543--556.

\bibitem{poi} J. Poineau, \emph{Polynomial approximation of Berkovich spaces and definable types}, Math. Ann. \textbf{358} (2014), 949--970 

\bibitem{op} B. Osserman, S. Payne, \emph{Lifting tropical intersections}, Doc. Math. \textbf{18} (2013), no.3, 121--175.

\bibitem{or} B. Osserman, J. Rabinoff, \emph{Lifting Non-Proper Tropical Intersections}, Contemporary Mathematics \textbf{605}, 15--44 Amer. Math. Soc., Providence, RI, (2013)

\bibitem{rst} J. Richter-Gebert, B.Sturmfels and T. Theobald, \emph{First steps in tropical geometry} in \emph{Idempotent Mathematics and Mathematical Physics}, Contemporary Mathematics, \textbf{377}, 289--317, Amer. Math. Soc., Providence, RI, (2005)

\bibitem{tem} M. Temkin, \emph{Introduction to Berkovich analytic spaces}, Preprint (2011), arXiv:math.AG/1010.2235

\bibitem{yuz} S. Yuzvinsky, \emph{Realization of finite abelian groups by nets in} $\mathbb{P}^2$, Compositio Math. \textbf{140} (2004), 1614--1624.


\end{thebibliography}
\end{document}